\newcommand{\Eavg}[1]{\mathrm{E}[#1]}
\newcommand{\sq}[1]{\left[ {#1} \right]}
\newcommand{\tr}[1]{{\textrm {Tr}}\sq{#1}}
\newcommand{\smallfrac}[2]{\mbox{$\frac{#1}{#2}$}}
\newcommand{\half}{\smallfrac{1}{2}}
\newcommand{\bra}[1]{\langle{#1}|}
\newcommand{\ket}[1]{|{#1}\rangle}
\newcommand{\op}[2]{\ket{#1}\bra{#2}}
\newcommand{\dg}{^\dagger}
\newcommand{\Dc}[1]{{\cal D}\sq{#1}}
\newcommand{\Hc}[1]{{\cal H}\sq{#1}}
\newcommand{\beq}{\begin{equation}} 
\newcommand{\eeq}{\end{equation}}
\newcommand{\bqa}{\begin{eqnarray}} 
\newcommand{\eqa}{\end{eqnarray}}
\newcommand{\erf}[1]{Eq.~(\ref{#1})}
\newcommand{\frf}[1]{Fig.~\ref{#1}}
\newcommand{\blk}{\color{black}}
\newcommand{\noncr} {\nonumber\\}
\newtheorem{theorem}            {Theorem}[section]
\newtheorem{sideremark}         [theorem]{Remark}
\newtheorem{sideeg}           [theorem]{Example}
\newtheorem{sideconj}           [theorem]{Conjecture}
\newcommand{\qed} {\hskip 0.2em\lower 0.7ex\hbox{\vbox{\hrule
\hbox{\vrule height 1.2ex\hskip 0.4em\vrule height 1.2ex}
\hrule}}}
\newcommand {\controlSet} {\mathcal{V}}
\newcommand {\controlValueSet} {\mathbf{V}}
\newcommand {\dpd}[2] {\frac{{\partial}^{2} {#1}}{\partial {#2}^{2}}}
\newcommand {\spd}[2]{\frac{\partial {#1}}{\partial {#2}}}
\def \secname {sec:optqstate:2010}
\def \eqnname {eqn:optqstate:2010}
\def \thmname {thm:optqstate:2010}
\def \appendname {appendix:optqstate:2010}
\def \solnot {\theta}
\def \aeq {&=}
\def \adefeq {&:=}
\def \notncsig {v}
\def \notncsigval {v}
\def \costfnone {J_{1}}
\def \ocostfnone {S}
\def \costfntwo {J_{2}}
\def \ocostfntwo {R}
\def \notncsigvalalp {\alpha}
\newcounter{AssumptionCounter}
\def \controlBound {\Omega}
\def \oaltcostfn {\mathcal{S}}
\def \targetSet {\mathcal{T}}
\def \thetaset {G}
\def \Largethetaset {\mathcal{G}}
\newcommand{\thn}[1] {\theta_{{#1}}}
\def \eop {\mathop{E}}
\begin{document}
\author{Srinivas Sridharan,Masahiro Yanagisawa, Joshua Combes}
\author{Srinivas Sridharan \thanks{S. Sridharan is with the Research School of Engineering, Australian National University, Canberra, ACT 0200, Australia and the University of California San Diego. srsridharan@ucsd.edu}
    \and
    Masahiro Yanagisawa \thanks{M. Yanagisawa is with the Research School of Engineering, Australian National University, Canberra, ACT 0200, Australia. ym@anu.edu.au.}\and
    Joshua Combes \thanks{J. Combes is with the Research School of Engineering, Australian National University, Canberra, ACT 0200, Australia and the Center for Quantum Information and Control
University of New Mexico MSC07--4220 Albuquerque, New Mexico 87131-0001, USA.  joshua.combes@gmail.com. }
}

\title{%Bounds on the quantum speed limit for a qbit open quantum systems\\
%Time optimal control of qbits: measurement and feedback can not increase rotation speed\\
%Optimal rotation control and weak solutions for a qubit  subject to  continuous measurement 
Optimal rotation control for a qubit  subject to  continuous measurement 
%Optimal control of rotations in \st{quantum systems} qubit  subject to continuous measurements 
%Can a continuous measurements and quantum  increase unitary rotation speed?
}

\date{\today}    

\maketitle

\begin{abstract}
In this article we analyze the optimal control strategy for rotating a monitored qubit from an initial pure state to an orthogonal state in minimum time. This strategy is described for two different  cost functions of interest which do not have the usual regularity properties. Hence, as classically smooth cost functions may not exist, we interpret these functions as viscosity solutions to the optimal control problem. 
Specifically we prove their existence and uniqueness in this weak-solution setting. In addition,  we also give bounds on the time optimal control to prepare any pure state from a mixed state.
\end{abstract} 
%
%In this article we obtain an optimal control strategy for rotating a monitored qubit from an initial pure state to an orthogonal state.  The optimal strategy for this non-linear control problem  is determined for two different cost metrics -- an expected discounted hitting time and the average trajectory reaching time. The dependence of these cost functions on the position of initial state relative to the measured observable is also examined. As classically smooth cost functions may not 
%exist we interpret these functions as viscosity solutions to the optimal control problem. Thereby, we
%prove their existence and uniqueness in this weak-solution setting.
%Finally we give bounds on the time optimal control to prepare any any pure state from a mixed state.
%\end{abstract} 

\section{Introduction}
It is anticipated that devices which make use of quantum effects will have a strong impact on future technology \cite{nielsen2000qca,WisMil10}. 
This motivates research on the time optimal procedure for the preparation of any desired state for monitored open quantum systems. 
%Unfortunately the problem is difficult to solve directly. Hence our approach is to split the problem in two stages: conversion of an initially mixed state to any pure state (termed {\em purification}) \cite{jacobs2003project}, followed by a unitary transformation of the  pure state obtained to the desired target pure state (referred to herein as {\em unitary control}). The results of these two stages can then be combined provide a bound on the original problem.
%Once a quantum state is purified, 
 For pure states, state preparation can be performed by unitary control whose purpose is to \lq rotate\rq\, the resulting pure state to a target (pure) state as fast as possible. A significant amount of work has been done on this problem for closed quantum systems \cite{Nielsen2006,nielsen2005gaq,caneva2009optimal,carlini2006toq}. At present little research has been undertaken for the corresponding open system problem\footnote{One exception is Ref.~\cite{bouten2005bellman} where a time optimal control problem for monitored open quantum systems was considered for a special case where measurement does not provide information about the system. In this situation the quantum trajectory becomes a linear quantum trajectory; which enabled the authors of \cite{bouten2005bellman} to formulate and then solve the system as a linear quadratic type optimal control problem.}.
Thus there is a need to consider more general cases of the time optimal state preparation problem for control theoretic and application driven reasons. 

In this article we first consider the time optimal ``unitary control'' for a qubit undergoing continuous measurement. To
measure the speed of convergence, we examine two different cost functions for this unitary control stage. The first 
is the mean of the times at which each trajectory attains the target (i.e. the expectation of the stopping time, which is the first passage time to the target state), termed the {\em mean hitting time}, and the other is the time at which the ensemble average of trajectories reaches attains the target state termed the {\em expected trajectory hitting time}.
The Hamilton-Jacobi-Bellman equations that arise in these cases turn out to be degenerate; due to 
this  the uniqueness of the solutions of these equations is not guaranteed. Hence our objective in 
this article is to  obtain  the optimal control strategies for these  quantum control problems  while 
dealing with these degeneracies. 
%rigorously analyze the optimal 

In the sections that follow, we consider the control problems arising from both these costs and indicate their solution using the dynamic programming approach. Further, we also point out the need for the interpretation of these solutions by a generalized solution framework.
%
%This article is organized as follows:  \srf{problem} describes the system model, control problem of interest, and cost functions. This is followed  (in  \srf{\secname averagetheta} and \srf{\secname meantimetotheta})  by an analysis, of the relevent cost functions, for the time taken by the trajectory to  reach a desired target state starting from a specific initial state.  In \srf{\secname noneig} we examine the influence of the location of initial and target states on the Bloch sphere has  on the value of the optimal for both types of cost functions. Finally, in \srf{\secname bounds} a bound is obtained  on the time taken to transform the maximally mixed state into any other state.  This is done by combining known results on purification with the results obtained in this article. The paper concludes with a discussion of open problems  of interest, in section \ref{\secname conc}.

\subsection{The problem}
Consider a quantum bit, called a {\em qubit} \footnote{Physically qubits can be realized as two level atoms, see \cite[Chap. 7]{nielsen2000qca} for more details and examples.}. { An arbitrary qubit state, denoted by the $2\times 2$ matrix $\rho$, is a positive operator in Hilbert space with the constraint that its trace is one. In the Dirac notation, pure states -- those states with $\tr{\rho^2}=1$ -- are denoted by a complex vector $\ket{\psi}=(\alpha,\beta)^T$ such that $|\alpha|^2+|\beta|^2=1$. The entire state space of qubits can be represented as a ball of radius one, called a Bloch sphere, where the pure states lie on the surface and the mixed states -- those states with $\tr{\rho^2}<1$ -- are in the interior. The $(x,y,z)$ axis of this sphere correspond to the directions of the eigenvectors of the Pauli matrices $\sigma_{k}$ where $k\in \{x,y,z\}$ \footnote{The Pauli matrices are: $\,\sigma_x = [0 ,1 ; 1 ,0],\,\,\sigma_y = i [0 ,-1 ; 1, 0],\,\, \sigma_z = [1 ,0 ; 0 ,-1]$.}. The states in the $z$ direction are usually denoted by $\ket{0} = (1,0)^T$ (or up in the $z$ direction) and $\ket{1} = (0,1)^T$ (or down in the $z$ direction). The matrix form for pure states is obtained by taking the outerproduct of the pure state vector: $\op{\psi}{\psi}$.} 

Consider a quantum bit, subjected to continuous weak measurement of the Hermitian operator $\sigma_{z}$ ($z$ component of angular momentum)  and feedback. The goal of the feedback is to take the initial state $\ket{0}$  and control it to the orthogonal state $\ket{1}$ in a time optimal manner. Intuitively, this requires a control rotation about the $Y$ axis. See \frf{Fig1} for a representation of this control problem on the Bloch sphere \cite{nielsen2000qca}.

\begin{figure}
\begin{center}
\includegraphics[width=0.4\hsize]{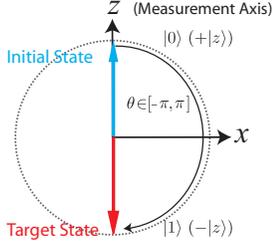}
\caption{The Bloch sphere with a graphical depiction of our control problem. We start in the plus eigenstate of the observable $\sigma_{z}$ and rotate to the orthogonal state $-\ket{z}$. {The controlled rotation axis is out of the page.} 
\label{Fig1} 
}
\end{center}
\end{figure}

{The model for such a system is given by the Stochastic Master Equation (SME) \cite{brun,steck,WisMil10}:
\begin{align}\label{SMEprob}
d\rho =& -i dt\, \smallfrac{1}{2}\alpha(t) [\sigma_{y},\rho] +2\gamma dt\, \Dc{\sigma_{z}}\rho\nonumber \\
&\,\,+\sqrt{2\gamma}dW\,\Hc{\sigma_{z}}\rho
\end{align}
The {\em measurement strength} $\gamma$ determines the rate at which measurement extracts information about the observable $\sigma_z$. In the equation above: %$\sigma_{k}$'s are the Pauli matrices%given by 
$\alpha(\cdot)$ denotes the  control signal\footnote{Physically this could arise from applying classical time dependent fields to a quantum system, which is standard in open loop quantum control.}; \lq$\dg$\rq\, denotes the adjoint of an operator; $dW$ is the innovation process~\cite{wong1985stochastic}; $[A,B]$ is the commutator; and   $\Dc{A} \rho \equiv A\rho A\dg -\half (A\dg A \rho + \rho A\dg A),$ $\Hc{A} \rho \equiv  A\rho +\rho A\dg - \tr{(A\dg+ A )\rho}\rho$ are superoperators \cite{WisMil10}.

From \erf{SMEprob} we may calculate the stochastic differential equations (SDE) for the Bloch components using the relation $d k = \tr{d\rho\,\sigma_{k}}$ (where $k\in {x,y,z}$). Next we assume that the available control is equal in strength (isotropic) about all axes $(x, z, y)$. This drastically simplifies the problem, as we can now exploit this symmetry to consider control about just one axis (the $y$ axis), so the dynamics are restricted to a single plane (the $x$-$z$ plane).  We further simplify the problem by transforming to polar coordinates 
$z = R \cos \theta,\,\, x = R \sin\theta$, where $\theta = \tan{(x/z)}^{-1}$, by applying the  It\={o} rules on the equations for the  Bloch components. For initially pure states (i.e., $\tr{\rho^{2}}=1$) the Bloch vector is confined to the surface of the sphere, i.e. $R=\sqrt{x^{2}+z^{2}}=1$ and thus our control problem reduces to the stochastic differential equation (SDE) for an angle:}
\begin{align}
d\theta \aeq \alpha(t)\, dt - 2 \gamma \sin(2 \theta) dt - 2 \sqrt{2 \gamma} \sin(\theta) dW. \label{\eqnname syseqn}
\end{align}
where $\theta \in[-\pi,  \pi]$.

%From \erf{SMEprob} we may calculate the stochastic differential equations (SDE) for the Bloch components using the relation $d k = \tr{d\rho\,\sigma_{k}}$ (where $k\in {x,y,z}$).
%, as follows
%\begin{align}
%dx &=-4\gamma x \,dt -2\sqrt{2\gamma}x z\, dW+ \alpha z\, dt \label{\eqnname dx}\\
%dy &=-4\gamma y\, dt -2\sqrt{2 \gamma} y z\, dW \label{\eqnname  dy}\\
%dz &=-2\sqrt{2\gamma} \,dW\, (1-z^{2})- \alpha x\, dt.\label{\eqnname dz} 
%\end{align}
%
%
% Moreover we further simplify the problem by  transforming to polar coordinates 
%\begin{align}
%z = R \cos \theta, \quad x = R \sin\theta,
%\end{align}
%where $\theta = \tan{(x/z)}^{-1}$. 
%
%
%Using It\={o} calculus on Eqns.~\eqref{\eqnname dx}--\eqref{\eqnname dz}.
%Next we assume that the initial state is pure (i.e., $\tr{\rho^{2}}=1$) and that measurements are efficient. Thus the Bloch vector is confined to the surface of the sphere, i.e. $R=\sqrt{x^{2}+z^{2}}=1$ and the system of equations \erf{\eqnname dx}-\eqref{\eqnname dz} reduces to the stochastic differential equation (SDE) for an angle:
%\begin{align}
%d\theta \aeq \alpha(t)\, dt - 2 \gamma \sin(2 \theta) dt - 2 \sqrt{2 \gamma} \sin(\theta) dW. \label{\eqnname syseqn}
%\end{align}
%where $\theta \in[-\pi,  \pi]$.
The first term in \erf{\eqnname syseqn} is the control signal applied. To ensure that the control problem is well posed we apply a bounded strength control, i.e. the controls are constrained to a   compact set $\controlValueSet:=[-\controlBound, \controlBound]$. In addition we require that 
$\Omega > 2 \gamma$ (for reasons required in Thm.~\ref{\thmname relatecostfns});  Note that this is a sufficient condition but may not be necessary. 
The class of piecewise continuous control signals that take up their values from $\controlValueSet$ is denoted by  $\controlSet$. This set $\controlSet$ is the set of signals which are progressively measurable with respect to the filtration of the random process.
%\sse{}{[ss to do] Introduce the control signal space $\controlSet$.}
The second term in \erf{\eqnname syseqn} represents the measurement back action.
% In fact, this can seen by setting $\theta = 0$ or  $\pi$ in which case this term vanishes as the measurement axis commutes with the state.  The measurement back-action is largest at $\theta = \pm \pi/2$ which is when the state and measurement axes are maximally non-commuting.
  The final term is the innovation term arising from measurements.

The solution to the SDE Eq.~\eqref{\eqnname syseqn} at any time $t\in [t_{0},\infty)$ starting from a point $\thn{0}$ (at a time $t_{0}$) and using a control {strategy} $\alpha \in \controlSet$, 
%for any sample trajectory indexed by a point $\omega$ in the underlying sample space  
is denoted by $\solnot(t;\alpha,t_{0},\thn{0})$. Note that this is a random variable whose value depends on an underlying sample space.  If the arguments used in this expression are clear from context, we represent   the solution at time $t$ using the  simplified notation $\thn{t}$.

\section{Two optimal control problems}
We consider two ways to formulate the cost function for time optimal rotation.  The first possible formulation
%,  often used in the stochastic control literature, 
is the expected hitting time i.e. the expectation of the times at which a trajectory hits  the target state $\targetSet$ (say $\pm \pi$).  The second formulation, is the shortest time at which average or expected trajectory first reaches $\targetSet$ - termed the {\em expected trajectory reaching time}.  We obtain the Hamilton-Jacobi-Bellman equations to be solved for these problems. Numerical solutions to this HJB equation can be obtained via standard techniques such as the value iteration methods.
%In this section we consider the control problems that arise via these different alternatives and solve them via an application of the dynamic programming method. 

\subsection{{Mean (discounted) hitting time}}\label{\secname dismeantimetotheta}
The {original} problem of interest  is to determine the average hitting time  to the target angle of $\pi$.  However it turns out that the Hamilton-Jacobi-Bellman equation that arises from this optimal control  problem is a degenerate elliptic PDE. As the existence of classical solutions to this equation are not guaranteed, we formulate an alternate cost function -- one for which  the existence and uniqueness of generalized (weak) solutions can be rigorously shown. This modified cost is the expected discounted hitting time to the target set  $\targetSet_{e}:=\pm \pi$. Thus the objective of the control problem is to control the system in order to minimize the expected discounted mean time to hit the target. 
We note that the solution to this problem is potentially different from the solution to the original un-discounted problem.  The proofs of the uniqueness and existence of the viscosity solutions  to the undiscounted case would require results applicable to degenerate HJB PDEs specific to the  undiscounted cost function - a result that we are currently unaware of.
%We note that this discounting is also intuitively inspired by a similar discounting (Kruskov transformation) used in the deterministic time optimal control problems where such a discounting also helps establish uniqueness of the undiscounted control problem \cite{Bardi}.
 
 Consider the optimal cost function defined over the set $\thetaset:= (-\pi, \pi)$, which has the form:
\begin{align}
\ocostfnone (\thn{0}) \aeq \inf_{\notncsig \in \controlSet}  \costfnone (\thn{0},\notncsig), \label{\eqnname meanhittingtimevalfn}
\end{align}
where
\begin{align}
\costfnone (\thn{0},\notncsig) \aeq  \mathrm{E} \Big[ \int_{0}^{\tau ^{\notncsig}_{\targetSet_{e}}{(\thn{0})}}{\exp{\{-\lambda s\}}\,ds}  \Big], \nonumber\\
\tau ^{\notncsig}_{\targetSet}(\thn{0}) &:= \inf \left\{t | \solnot(t; \notncsig,0, \thn{0}) \in \targetSet \right\}.\label{\eqnname discount}  
% \label{\eqnname thittingtimedef}
\end{align}
 The parameter $\lambda$ ($>0$) in \erf{\eqnname discount}   is called a discount factor.
%  and has a lower bound $\lambda \geq 0$. 
In order to obtain the optimal control strategy to this problem, we apply the dynamic programming method \cite{bellman2003dp,bertsekas1995dpa} from optimal control theory. This yields the associated  Hamilton-Jacobi-Bellman equation, for the cost function $\ocostfnone$, of the form
\begin{align}
\sup_{\notncsigval \in \controlValueSet}  \left \{-1+ \lambda \phi - L^{\notncsigval }[\phi ] (y) \right\} =0, \quad \forall y \in \thetaset \label{\eqnname hjbdef1}
\end{align}
with boundary conditions $\phi(\targetSet_{e}) = 0$. The differential operator $L^{v}[\phi](y)$ in \erf{\eqnname hjbdef1} is defined as
\begin{align}
 L^{\notncsigval }[\phi ] (y) \adefeq  \mathbf{b}(y,v) \spd{\phi}{\theta} \Bigg |_{\theta = y}+ \frac{1}{2} {\boldsymbol\sigma}^{2}(y) \dpd{\phi}{\theta}\Bigg |_{\theta = y} \label{\eqnname loperatordefn},
\end{align}
and is understood as the generator of the It\^o diffusion process Eq.\eqref{\eqnname syseqn}. The coefficients ${\bf b}, {\boldsymbol \sigma}$ may be obtained  from the relevant SDE \erf{\eqnname syseqn} which has the form  $dx = {\bf b}(x,v)dt + {\boldsymbol \sigma}(x) dW$. Hence, we find that ${\bf b}(x,v) :=  v - 2 \gamma \sin(2 x)$ and  ${\boldsymbol\sigma}(\theta) := 2 \sqrt{2 \gamma} \sin(\theta)$.
Note that  Equation (\ref{\eqnname hjbdef1}) (with $\lambda = 0$) takes the form of a degenerate elliptic PDE \textit{irrespective of whether $\lambda$ is $> 0$ or $= 0$}; this is because the second order partial derivative term $\sigma(\cdot)$ is zero at some points in the domain. \blk Hence the positivity condition on the coefficient of the second order derivative, which is  sufficient  for classical solutions to this equation to exist, does not hold \cite{wong1985stochastic,fleming2006cmp}  \footnote{Unsurprisingly,  the {sufficiency} condition for the Fokker-Planck equation \cite{gardiner1985handbook} to have a smooth solution also does not hold.}.  Physically this degeneracy corresponds to the presence of a symmetry of revolution around that point. In order to analyze this situation 
%as there are no guarantees on the smoothness  of the solution to the HJB equations above,   
we  study the solution to this problem via the notion of generalized (viscosity) solutions to this PDE.  The desired uniqueness and existence result for the hitting time function proceed as follows.

\subsubsection{Viscosity solution for a discounted hitting time problem} 
%The system dynamics  of interest (Eq.~\eqref{\eqnname syseqn})
%%in this article
%%
%%\begin{align}
%%d\theta \aeq \alpha(t)\, dt - 2 \gamma \sin(2 \theta) dt - 2 \sqrt{2 \gamma} \sin(\theta) dW. \label{\eqnname syseqn1}
%%\end{align}
%%This 
%has the form $dX_{t} = b(X_{t},\alpha) dt + \sigma(X_{t}) dW_{t}$, where $\alpha_{t}$ is a control signal which takes up values from a compact metric space $\controlValueSet$ and is progressively measurable with respect to the Filtration associated with the Brownian motion $W_{t}$.  The state $X_{t}$ evolves on a bounded set $G:=(-\pi, \pi)$.  In addition, $b(\cdot)$ and $\sigma(\cdot)$ are continuous functions on $\bar{G} \times \controlValueSet$ and $\bar{G}$ respectively. 
%
%We choose a discounted mean hitting time cost function given by 
%%\begin{align}
%%J_{t}(x,\alpha)  \aeq   {\mathrm{E} \left [ \int_{t}^{\tau_{\targetSet}(x,\alpha)} \exp\{-\lambda s\} \,ds \right ]},
%%\end{align}
%\begin{align}
%J(x,\alpha)  \aeq   {\mathrm{E} \left [ \int_{0}^{\tau_{\targetSet}(x,\alpha)} \exp\{-\lambda s\} \,ds | x(0) = x \right ]},
%\end{align}
%where $\tau_{\pi}(x,\alpha)$ is the time taken for the state to enter the target set $\targetSet$ (which in our case of interest is $\partial G$). The optimal control function is defined as
%\begin{align}
%V(x) \adefeq  \inf_{\alpha \in \controlSet} J(x,\alpha).
%\end{align}
It turns out that the HJB equation associated with this optimal control problem has the form: $F(x,u,Du,D^{2} u) = 0$, with boundary conditions $\phi(\targetSet) = 0$. For the system and cost function under consideration, $F(\cdot)$ is defined as
\begin{align}
F(x,u,p,M)\!\adefeq - 4 \gamma [\sin(x)] ^{2} M +  \nonumber\\
&\,\sup_{\notncsigvalalp \in \controlValueSet} \Big\{ \!-\!\Big[\notncsigvalalp - 2 \gamma \sin(2 x)\Big] p + \lambda u -1 \Big\}\label{\eqnname hjbformeanpathwithomegaB}.
\end{align}
The second term on the right hand side of the expression above is termed the Hamiltonian and can be represented by a function of the form $H(x,u,p)$.
Note that in this case, the HJB equation is a degenerate  non-linear elliptic PDE hence classical solutions may not exist. Therefore it is necessary to understand the solution to this equation in a weak / viscosity sense.
\subsubsection{Existence and uniqueness of the viscosity solution} \label{\appendname viscsolnfordiscsoln}
The following result yields the uniqueness of the hitting time function.
\begin{theorem} \label{\thmname uniquofdiscountedvs}
The value function is
% continuous and is 
 the unique continuous viscosity solution of the HJB equation: $-\half \sigma^{2}(x) D^{2}u + H(x,u,Du) = 0, \,\,  x \in G$, in $G$  with boundary condition $u(\partial G) = 0$.
\end{theorem}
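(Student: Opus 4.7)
The plan is to prove existence and uniqueness as two separate assertions, with uniqueness being the substantive step. For existence, I will show that the value function $\ocostfnone$ from \erf{\eqnname meanhittingtimevalfn} is a viscosity solution of the HJB equation by the standard route: establish continuity on $\bar{G}$, derive the dynamic programming principle, and use smooth test functions together with It\^o's formula to pass to the viscosity sub/supersolution inequalities. The discount factor $\lambda > 0$ immediately gives the a priori bound $0 \leq \ocostfnone(y) \leq 1/\lambda$. Continuity up to $\partial G$ is more delicate in this degenerate setting, and I will exploit the hypothesis $\Omega > 2\gamma$: since $|2\gamma \sin(2x)| \leq 2\gamma < \Omega$, the admissible controls can always point the drift toward $\partial G$, which makes the expected hitting time uniformly bounded and continuous in $\thn{0}$ via standard SDE stability estimates and the boundary condition $\ocostfnone(\pm\pi) = 0$.

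For uniqueness I will invoke a comparison principle for the degenerate PDE $F(x,u,Du,D^{2}u)=0$: if $u$ is a USC viscosity subsolution and $w$ is a LSC viscosity supersolution with $u \leq w$ on $\partial G$, then $u \leq w$ on $\bar G$. Applying this once to $(\ocostfnone, \phi)$ and once with the roles reversed (for any other continuous viscosity solution $\phi$) forces $\phi \equiv \ocostfnone$. The argument is Ishii's doubling-of-variables technique: assuming $\max_{\bar G}(u-w) > 0$, I will penalize via $\Phi_{\epsilon}(x,y) = u(x) - w(y) - |x-y|^{2}/(2\epsilon)$, locate maximizers $(x_{\epsilon}, y_{\epsilon})$ in $\bar G \times \bar G$ (which are forced into the interior for small $\epsilon$ by the boundary inequality), and invoke the Crandall--Ishii lemma to produce matrices $X_{\epsilon}, Y_{\epsilon}$ in the appropriate semijets of $u$ and $w$ satisfying the standard matrix inequality. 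Subtracting the two viscosity inequalities and sending $\epsilon \to 0$ should yield the contradiction.

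To carry this through I need to verify the structural hypotheses of the standard framework \cite{fleming2006cmp}. Rewriting \erf{\eqnname hjbformeanpathwithomegaB} as $H(x,u,p) = \Omega|p| + 2\gamma\sin(2x)\,p + \lambda u - 1$, one checks that (i) $H$ is Lipschitz in $p$ uniformly in $x$; (ii) $|H(x,u,p) - H(y,u,p)| \leq 4\gamma|x-y|\,|p|$, giving the required modulus of continuity in the $x$-variable; and (iii) $H$ is strictly monotone in $u$ with coefficient $\lambda > 0$. The diffusion ${\boldsymbol\sigma}(x) = 2\sqrt{2\gamma}\sin(x)$ is globally Lipschitz on $\bar G$, so the trace term $\tfrac{1}{2}[{\boldsymbol\sigma}(x_\epsilon)^{2} X_\epsilon - {\boldsymbol\sigma}(y_\epsilon)^{2} Y_\epsilon]$ produced by the matrix inequality is controlled by $C|x_\epsilon - y_\epsilon|^{2}/\epsilon$, which vanishes as $\epsilon \to 0$ together with the standard penalty estimate. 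The strict monotonicity in $u$ then provides the positive gap $\lambda \cdot \max(u-w) > 0$ that absorbs the residual error and forces the contradiction.

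The main obstacle is precisely the degeneracy highlighted by the authors: ${\boldsymbol\sigma}$ vanishes at $x=0$, so one cannot appeal to strong ellipticity and classical $C^{2}$ solutions need not exist. The virtue of the viscosity framework is that the doubling-of-variables comparison requires only \emph{Lipschitz} regularity of ${\boldsymbol\sigma}$, not a positive lower bound on ${\boldsymbol\sigma}^{2}$, so the vanishing of the diffusion coefficient is handled automatically by the estimate above. A secondary delicate point is obtaining continuity of $\ocostfnone$ up to $\partial G$ without the regularizing effect of a non-degenerate diffusion; this is exactly where $\Omega > 2\gamma$ is essential, as it gives a deterministic rate at which trajectories can be steered to the boundary and rules out pathological boundary behavior of the value function.
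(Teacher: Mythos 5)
Your proposal is correct in outline, but it takes a genuinely different route from the paper: the paper's entire proof is a one-line appeal to Theorem 4.1 of Barles and Burdeau \cite{barles1995dirichlet}, a result built precisely for Dirichlet problems for degenerate elliptic equations arising from stochastic exit-time control, whereas you reconstruct the argument from first principles via the dynamic programming principle and the Crandall--Ishii--Lions doubling-of-variables comparison. Your interior comparison step is sound as stated: the Hamiltonian $H(x,u,p)=\controlBound|p|+2\gamma\sin(2x)\,p+\lambda u-1$ does satisfy your conditions (i)--(iii), the Lipschitz continuity of ${\boldsymbol\sigma}$ (rather than any lower bound on ${\boldsymbol\sigma}^{2}$) is what controls the trace term through the Crandall--Ishii matrix inequality, and $\lambda>0$ supplies the strict monotonicity that yields the contradiction. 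What the citation buys the authors, and what remains only sketched in your proposal, is the boundary analysis: for degenerate exit-time problems the value function need not attain its boundary data continuously, and here ${\boldsymbol\sigma}$ vanishes \emph{at the boundary points} $\pm\pi$ themselves (as does the uncontrolled drift $-2\gamma\sin(2\theta)$), so no regularizing effect of the noise is available there. Your appeal to $\controlBound>2\gamma$ identifies the right mechanism, and it can be made precise with the linear Lyapunov function $V(\theta)=(\pi-\theta)/(\controlBound-2\gamma)$, for which the second-order term vanishes identically and $L^{\controlBound}[V]\le -1$, so that Dynkin's formula gives $\ocostfnone(\thn{0})\le \mathrm{E}[\tau]\le (\pi-\thn{0})/(\controlBound-2\gamma)$, hence continuity up to $\partial G$ with the correct boundary value (the case $\thn{0}\to-\pi$ is symmetric); the same controllability argument, not plain ``SDE stability'' (which does not suffice for exit-time costs because the exit time is not a continuous functional of the path), is what gives interior continuity of $\ocostfnone$. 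With that step made explicit, your self-contained argument and the paper's citation establish the same statement; yours is longer but exposes exactly where the hypotheses $\lambda>0$ and $\controlBound>2\gamma$ enter, while the paper's delegates the delicate boundary behavior to the cited framework.
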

\begin{proof}
This result follows from  Theorem 4.1 in \cite{barles1995dirichlet}. 
%Owing to space constraints we defer the details to a future publication.
\end{proof}
%
%It can be verified that these assumptions A1 - A6 hold on the system of interest  for the discounted hitting time problem. Thus the existence and uniqueness  of the value function follow immediately.

% is provided in Thm.~\ref{\thmname uniquofdiscountedvs} of Appendix \ref{\appendname viscsolnfordiscsoln}.

%weaker notion of  ensure existence and uniqueness 
%and thus the notion of a weak solution to these equations must be used.  
%
%%not corresponding to this problem  requires the same  Fo  smoothness 
%{ Thus a} unique generalized solution  can be shown to exist under suitable assumptions.
%\begin{theorem}
%For $\lambda>0$,  the value function Eq.~\eqref{\eqnname discount} is continuous in $G$ and is the unique continuous viscosity solution of the HJB Eq.~\eqref{\eqnname hjbdef1}.
%\end{theorem}
%\begin{proof}
%Ref. 
%\end{proof}
\subsection{Expected trajectory {reaching} time}\label{\secname averagetheta} 
The cost function to be minimized for this problem takes the form
\begin{align}
\costfntwo (\thn{0},\notncsig) \adefeq \inf \left\{ t | \mathrm{E}\big[ \solnot(t; \notncsig, 0, \thn{0}) \big] \in \targetSet \right\} \nonumber \\
&=  \inf \left\{ t | \big{|} \mathrm{E}\big[ \solnot(t; \notncsig, 0, \thn{0}) \big]\big{|} = \pi \right\}.\label{\eqnname costfntwodef}
\end{align}
The optimal cost function in this case is:
\begin{align}
\ocostfntwo(\thn{0}) \adefeq \inf_{\notncsig \in \controlSet} {\costfntwo(\thn{0}, \notncsig)} \label{\eqnname ocostfntwodef}.
\end{align}
Now, the form of the  HJB equations for the cost function above with terminal constraints on the expectation $\Eavg{\cdot}$ has a form that does not permit {analytical}  analysis. This is due to the fact that as the underlying SDE, \erf{\eqnname syseqn}, is nonlinear. Consequently the fundamental quantity of interest $\Eavg{\solnot(\tau;\alpha,t_{0},\thn{0})}$ does not appear to have a closed form solution. 
Nevertheless it is possible to solve the control problem for the optimal strategy by formulating an associated problem, whose solution leads to the solution of \erf{\eqnname ocostfntwodef}. 
This is achieved as follows. 
Consider the problem of determining  the control policy that maximizes $|\Eavg{\solnot(T; \alpha,t_{0},\thn{0})}|$  at a fixed time $T$. This can be easily seen to be equivalent to choosing the maximum of the terms ${M_{1}}(\thn{0})$ and ${M_{2}}(\thn{0})$ where these  functions arise from the  optimal control problems: 
\begin{subequations}\label{\eqnname m1eqn}
\begin{align}
 M_{1}(\thn{0})\adefeq \sup_{\alpha \in \controlSet} \Eavg{\solnot(T; \alpha,t_{0},\thn{0})},\\
M_{2}(\thn{0}) &:= -\inf_{\alpha \in \controlSet}\Eavg{\solnot(T; \alpha,t_{0},\thn{0})}.
\end{align}
\end{subequations}
%and 2) minimizing $$.
Formally both of these are Mayer type optimal control problems (with zero running cost, fixed terminal penalty and a fixed time horizon $T>0$). Specifically, the equations above have the form:
%\begin{align}
% M_{1}(\thn{0})\aeq \oaltcostfn^{a}_{t,T} (\thn{0})  = \sup_{\notncsig \in \controlSet} 
% \mathrm{E} \big[\solnot(T; \notncsig,t,\thn{0})| \theta(t) = \thn{0}\big]  \label{\eqnname oaltcostfn1def}
%=  \sup_{\notncsig \in \controlSet} \mathrm{E} \left [ \int_{t}^{T} 0 \,ds  + \thn{T} \right ].\\
% M_{2}(\thn{0})\aeq \oaltcostfn^{b} _{t,T} (\thn{0})  =  \sup_{\notncsig \in \controlSet} 
% \mathrm{E} \big[-\solnot(T; \notncsig,t,\thn{0})| \theta(t) = \thn{0}\big]  \label{\eqnname oaltcostfn2def}
%=  \sup_{\notncsig \in \controlSet} \mathrm{E} \left [ \int_{t}^{T} 0 \,ds  - \thn{T} \right ].
%\end{align}
\begin{align}
 M_{1}(\thn{0})= \oaltcostfn^{a}_{t,T} (\thn{0})  &= \sup_{\notncsig \in \controlSet} 
  \mathrm{E} \big[\solnot(T; \notncsig,t,\thn{0})\big] \nonumber \\
&=  \sup_{\notncsig \in \controlSet} \mathrm{E} \left [ \int_{t}^{T} 0 \,ds  + \thn{T} \right ]. \label{\eqnname oaltcostfn1def}
\\
 M_{2}(\thn{0})= \oaltcostfn^{b} _{t,T} (\thn{0})  &=  \sup_{\notncsig \in \controlSet} 
 \mathrm{E} \big[-\solnot(T; \notncsig,t,\thn{0})\big]  \nonumber\\
&=  \sup_{\notncsig \in \controlSet} \mathrm{E} \left [ \int_{t}^{T} 0 \,ds  - \thn{T} \right ].\label{\eqnname oaltcostfn2def}
\end{align}

We now describe the  result that links these cost functions $\oaltcostfn^{a}, \,\oaltcostfn^{b}$,  with the original cost function of interest $\ocostfntwo$ (Eq.~\eqref{\eqnname ocostfntwodef}).
% We defer the proof to a subsequent publication.
\begin{theorem}\label{\thmname relatecostfns}
\begin{align}
\ocostfntwo(\thn{0}) = \inf \left \{ T \big| \max\{\oaltcostfn^{a} _{0,T}(\thn{0}), \oaltcostfn^{b} _{0,T}(\thn{0})\} > \pi \right\} \label{\eqnname diffTvaluesforRS}.
\end{align}
\end{theorem}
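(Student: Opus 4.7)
My plan is to prove Eq.~\eqref{\eqnname diffTvaluesforRS} by two matching inequalities. Let $T^{\ast}:=\ocostfntwo(\thn{0})$ and let $\tilde{T}$ denote the right-hand side of \erf{\eqnname diffTvaluesforRS}. A preliminary step I will use in both directions is continuity of $t\mapsto \mathrm{E}[\solnot(t;\alpha,0,\thn{0})]$ for every admissible $\alpha\in\controlSet$. Writing \erf{\eqnname syseqn} in integral form and taking expectations, the stochastic integral $-2\sqrt{2\gamma}\int_{0}^{t}\sin\theta(s;\alpha)\,dW_{s}$ is a zero-mean martingale since its integrand is bounded, so
\begin{equation*}
\mathrm{E}[\theta(t;\alpha)]=\thn{0}+\int_{0}^{t}\mathrm{E}\bl\alpha(s)-2\gamma\sin(2\theta(s;\alpha))\br\,ds,
\end{equation*}
whose integrand is bounded by $\controlBound+2\gamma$. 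Hence $t\mapsto\mathrm{E}[\theta(t;\alpha)]$ is Lipschitz, and whenever finite the first hitting time $\costfntwo(\thn{0},\alpha)$ of $\targetSet=\{\pm\pi\}$ is attained as a minimum, with $|\mathrm{E}[\theta(\costfntwo;\alpha)]|=\pi$.

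For $T^{\ast}\leq\tilde{T}$: let $T$ be any time with $\max\{\oaltcostfn^{a}_{0,T}(\thn{0}),\oaltcostfn^{b}_{0,T}(\thn{0})\}>\pi$. By definition of supremum there exists $\alpha\in\controlSet$ with $|\mathrm{E}[\theta(T;\alpha)]|>\pi$. Since $|\thn{0}|<\pi$ and $\mathrm{E}[\theta(\cdot;\alpha)]$ is continuous, the intermediate value theorem supplies some $T'\leq T$ at which $|\mathrm{E}[\theta(T';\alpha)]|=\pi$. Thus $\costfntwo(\thn{0},\alpha)\leq T'\leq T$, whence $T^{\ast}\leq T$. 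Taking infimum over admissible $T$ yields $T^{\ast}\leq\tilde{T}$.

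For the reverse $\tilde{T}\leq T^{\ast}$: fix $\eps>0$ and pick $\alpha_{\eps}\in\controlSet$ with $t_{\eps}:=\costfntwo(\thn{0},\alpha_{\eps})<T^{\ast}+\eps/2$; by the preliminary observation $\mathrm{E}[\theta(t_{\eps};\alpha_{\eps})]=\pm\pi$, and I will take $+\pi$ without loss of generality. Extend $\alpha_{\eps}$ to a control $\alpha'\in\controlSet$ by setting $\alpha'(s)=\alpha_{\eps}(s)$ on $[0,t_{\eps}]$ and $\alpha'(s)=\controlBound$ on $(t_{\eps},T^{\ast}+\eps]$; since $t_{\eps}$ is deterministic and $\controlBound\in\controlValueSet$, this extension is progressively measurable, piecewise continuous, and $\controlValueSet$-valued. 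Using $|\sin(2\theta)|\leq 1$ and the standing assumption $\controlBound>2\gamma$,
\begin{equation*}
\mathrm{E}[\theta(T^{\ast}+\eps;\alpha')]\;\geq\;\pi+(\controlBound-2\gamma)(T^{\ast}+\eps-t_{\eps})\;>\;\pi,
\end{equation*}
since $T^{\ast}+\eps-t_{\eps}>\eps/2>0$. Therefore $\oaltcostfn^{a}_{0,T^{\ast}+\eps}(\thn{0})>\pi$, giving $\tilde{T}\leq T^{\ast}+\eps$; letting $\eps\downarrow 0$ completes the proof.

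The main obstacle is the reverse direction, where I must exhibit an admissible control that drives the expectation \emph{strictly} past $\pm\pi$. This is exactly why the paper imposes $\controlBound>2\gamma$ as a standing hypothesis for this theorem: the saturating control $\pm\controlBound$ dominates the worst-case measurement-induced drift $-2\gamma\sin(2\theta)$ so that $\mathrm{E}[\theta]$ can be pushed uniformly further from the origin. Without this bound the extension in the last paragraph would only guarantee $\mathrm{E}[\theta(T^{\ast}+\eps;\alpha')]\geq\pi$ rather than strict inequality, and the strict ``$>\pi$'' appearing on the right-hand side of \erf{\eqnname diffTvaluesforRS} would no longer be attainable.
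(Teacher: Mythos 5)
Your proof is correct and follows essentially the same route as the paper's: one direction via continuity of $t\mapsto\mathrm{E}[\theta(t;\alpha)]$ and the intermediate value theorem, the other by concatenating a near-optimal control with the saturating control $+\controlBound$ and invoking $\controlBound>2\gamma$ to push the expectation strictly past $\pi$. Your version is somewhat more careful than the paper's (explicit martingale/Lipschitz justification of continuity and explicit $\eps$-bookkeeping in place of the paper's set-equality formulation), but the substance is identical.
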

\begin{proof}
We begin by noting that the RHS of Eq.~\eqref{\eqnname diffTvaluesforRS} can be written as follows 
\begin{align}
&\max\{\sup_{\notncsig \in \controlSet}[\eop(\theta(T ; \notncsig, 0, \thn{0}))], \sup_{\notncsig \in \controlSet}[\eop(-\theta(T ; \notncsig, 0,\thn{0}))]\}\noncr
&= \sup_{\notncsig \in \controlSet}[{\max(\eop(\theta(T ; \notncsig, 0, \thn{0})), -\eop(\theta(T ; \notncsig, 0, \thn{0})))}] \noncr
&=  \sup_{\notncsig \in \controlSet}[|\eop[\theta(T ; \notncsig, 0, \thn{0})]|]
\end{align}
%\max{sup(E(?(T ; v, 0, ?0))), sup(E(??(T ; v, 0, ?0)))} v?V	v?V
%=	sup{max(E(?(T ; v, 0, ?0)), ?E(?(T ; v, 0, ?0)))} v?V
%=	sup{|E(?(T ; v, 0, ?0))|} v?V
Hence the statement of this theorem is equivalent to demonstrating that 
\begin{align}
\{T| \inf_{\notncsig \in \controlSet}\{ t_{\notncsig}\big{|} |\eop [\theta(t_\notncsig;\notncsig,0,\thn{0})]| \geq \pi \}<T \}\nonumber \\
= \{ T|  \sup_{\notncsig \in \controlSet} [|\eop [\theta(T;\notncsig,0,\thn{0})]|] > \pi \},\label{\eqnname L1}
\end{align}
as the desired result follows immediately from this (by taking an infimum over both sets). 
%Note that this result replaces a statement in $|E(\cdot)|$ by two expressions of the form $E|(\cdot)|$.
Thus the proof proceeds via two steps.
\\\textbf{Stage 1}:  We demonstrate that the LHS $\supseteq$ RHS i.e., any element $T$ of the set on the right also belongs to the set on the left.\\
Given $T_1 \in \{ T|  \sup_{\notncsig \in \controlSet} [|\eop [\theta(T;\notncsig,\thn{0})]|] > \pi \}$
it follows that  
\begin{align}
\exists \notncsig_1 \in \controlSet  \,\,\text{s.t } \quad
|\eop [\theta(T_1;\notncsig_1,0,\thn{0})]| > \pi
\end{align}
Therefore $\inf_{\notncsig \in \controlSet}\{ t_{\notncsig}\big{|} |\eop [\theta(t_\notncsig;\notncsig,0,\thn{0})]| \geq \pi \}<T_1 \} $. This implies that 
\begin{align}
T_1 \in \{T| \inf_{\notncsig \in \controlSet}\{ t_{\notncsig}\big{|} |\eop [\theta(t_\notncsig;\notncsig,0,\thn{0})]| \geq \pi \}<T \},
\end{align}
thereby proving the first part.
%
%From the definition of the supremum, the continuity of $\eop |\theta(\cdot)|$ and the fact that $\controlSet$ is a metric space, it follows that  the left hand side of Eq.~\eqref{\eqnname L1} is equivalent to the following statement
%%\begin{align}
%%&\exists  \text{ a convergent sequence } v_{1}, v_{2} \ldots v_{k}  \ldots, \text{ and an } N>0 \noncr
%%& \text{ s.t } \forall n> N,  \quad \eop |\theta(T;\notncsig_{n},\thn{0})| > \pi.
%%\end{align}
%\begin{align}
%\exists  v_n \,\,\text{s.t} \quad \eop |\theta(T;\notncsig_{n},\thn{0})| > \pi.
%\end{align}
%%Hence there exists a  convergent sequence $v_{1}, v_{2} \ldots v_{k}  \ldots$, with a corresponding sequence of hitting times to $\pi$ (denoted by $t_{v_{k}}$) and an $N>0$ s.t  
%%\begin{align}
%%\forall n > N, \quad \eop |\theta(t_{\notncsig_{n}};\notncsig_{n},\thn{0})| \geq \pi, \quad t_{\notncsig_{n}}<T. \label{\eqnname L3}
%%\end{align}
%Hence there exists a corresponding hitting time to $\pi$ (denoted by $t_{v_{n}}$) s.t  
%\begin{align}
% \eop |\theta(t_{\notncsig_{n}};\notncsig_{n},\thn{0})| \geq \pi, \quad t_{\notncsig_{n}}<T. \label{\eqnname L3}
%\end{align}
%This implies  
%\begin{align}
%\inf_{\notncsig \in \controlSet}\{ t_{\notncsig}|\eop |\theta(t_{\notncsig};\notncsig,\thn{0})| \geq \pi\}<T,
%\end{align}
%thereby proving the first part.
\\
\textbf{Stage 2:} We demonstrate that the RHS $\supseteq$ LHS.\\
Let $T_1 \in \{T| \inf_{\notncsig \in \controlSet}\{ t_{\notncsig}\big{|} |\eop [\theta(t_\notncsig;\notncsig,0,\thn{0})]| \geq \pi \}<T \} $.
 Hence
\begin{align}
\exists t_1 < T_1,\,\notncsig_1 \in \controlSet  \,\,\text{s.t}\quad |\eop [\theta(t_1;\notncsig_1,0,\thn{0})]| = \pi.
\end{align}
Without loss of generality we consider the case $\eop [\theta(t_1;\notncsig_1,0,\thn{0})]=\pi$. The alternative case follows almost directly. 
Now, by applying a control $+\Omega$  for all $t > t_1$ we have from the solution of the SDE that:
\begin{align}
\eop[\theta(T,\notncsig,0,\thn{0})] - \eop[\theta(t_1,\notncsig_1,0,\thn{0})]\nonumber\\
 = \eop\left \{\int_{t_1}^{T}{[\notncsig(t) - 2 \gamma \sin(2 \theta(t))] dt} \right\}.
\end{align}
Note that due to the assumption on the control signal bound ($\Omega > 2 \gamma$), it follows that
\begin{align}
\eop[\theta(T,\notncsig,0,\thn{0})] - \eop[\theta(t_1,\notncsig_1,0,\thn{0})]  > 0,
\end{align}
Hence $T>t_1$ and therefore $\sup_{\notncsig \in \controlSet} [\eop [\theta(T;\notncsig,0,\thn{0})]] > \pi  $, leading to the fact that $T \in RHS$. This proves the second part.
%
%
%For the sake of brevity we present the core idea for this stage. The arguments can be made more rigorous.  Note that if the RHS of Eq.~\eqref{\eqnname L1} is assumed, then for a time $T>0$  there exists a control signal $v_{n}$ and corresponding hitting time $t_{v_{n}}$  such that the trajectory hits the target $\pi$ at a time $t_{\notncsig_{n}} < T$ (ref. Eq.~\eqref{\eqnname L3}). From this time instant when the trajectory hits the target, it can be shown that  a fixed control of $\pm \controlBound$ (chosen with the same sign as the trajectory angle value) can be applied from that time instant onwards, to keep the system state at the time $T$ in the region $> \pi$ or $< -\pi$. This yields the LHS of  Eq.~\eqref{\eqnname L1}.
From the two stages above, the result follows. 
%
%We first note that the statement of the theorem is equivalent to the following:
%\begin{align}
%\ocostfntwo(\thn{0}) \aeq \inf \{T | \sup_{\notncsig \in \controlSet} [\eop |\theta(T;\notncsig,\thn{0})|] > \pi  \}\label{\eqnname rhstermeqforfixedtimehorizon}.
%\end{align}
%Observe that 
%\begin{align}
%\sup_{\notncsig \in \controlSet}[\eop |\theta(T;\notncsig,\thn{0})|]> \pi  \Leftrightarrow \ocostfntwo(\thn{0}) < T.\end{align}
%Hence we have 
%\begin{align}
%\{T| \sup_{\notncsig \in \controlSet}[{\eop |\theta(T;\notncsig,\thn{0})|}]> \pi \} = \{T| \ocostfntwo(\thn{0}) < T\}
%\end{align}
%Therefore the right hand side of Eq.~\eqref{\eqnname rhstermeqforfixedtimehorizon} can be written as
%\begin{align}
%\inf \{T|  \sup_{\notncsig \in \controlSet} [\eop |\theta(T;\notncsig,\thn{0})|]> \pi  \}  \aeq \inf \{T| \ocostfntwo(\thn{0}) < T\} \\ \aeq \ocostfntwo(\thn{0}).
%\end{align}
%Thus the desired result follows.
\end{proof}

%\begin{proof}
%Ref. Appendix \ref{\appendname theoremforchangeofcostfn}.
%\end{proof}
This result indicates an approach to obtain the desired cost function $\ocostfntwo$; 
We first determine the solution to the control problems for $\oaltcostfn^{a}_{0,T}$, $\oaltcostfn^{b}_{0,T}$ given by Eqns.~\eqref{\eqnname oaltcostfn1def}, \eqref{\eqnname oaltcostfn2def} for a particular value of $T$. Then we can use  progressively smaller values of this terminal time $T$, in order to obtain the desired value of $\ocostfntwo$ via  Eq.~\eqref{\eqnname diffTvaluesforRS}.
Now, 
%the approach to determine the cost function $\oaltcostfn$ (Eq.~\eqref{\eqnname oaltcostfndef})   proceeds as follows. 
to obtain the solutions to the optimal control problems $\oaltcostfn^{a}_{(\cdot)}$, $\oaltcostfn^{b}_{(\cdot)}$   we apply the dynamic programming method  to yield the Hamilton-Jacobi-Bellman equation (HJB). For the sake of brevity, we describe the procedure for $\oaltcostfn^{a}_{(\cdot)}$, since the approach for $\oaltcostfn^{b}_{(\cdot)}$ then  follows immediately. 
Given any function\footnote{the notation denotes $C^{1}$ in time and $C^2$ in space.} $\phi  \in C^{1,2}(\Largethetaset)$ where $\Largethetaset := [0,T] \times \mathbb{R} $ we formulate the associated HJB equation  for the optimal control problem \eqref{\eqnname oaltcostfn1def} as
\begin{align}
\sup_{\notncsigval \in \controlValueSet}  \left \{\spd{\phi}{t} + L^{\notncsigval }[\phi] (y) \right\} =0, \quad \forall y \in \mathbb{R}, \quad t \in [0, T] \label{\eqnname hjbaltcostfn}.
\end{align}
The differential operator $L^{v}[\phi](y)$ in the equation above is defined as before. This HJB equation is 
%The HJB equation is found to be: 
%\begin{align}
% \spd{\phi}{t} + \sup_{\notncsigval \in \controlValueSet}  \Big[\notncsigval - 2 \gamma \sin(2 y)\Big] \spd{\phi}{y} + 4 \gamma [\sin(y)] ^{2} \dpd{\phi}{y}\aeq 0,\label{\eqnname hjbformeanpathwithomegaB}
%\end{align}
%and is
 solved over the domain $\mathbb{R}$. Note that this domain is different from $[-\pi, \pi]$ since the objective in the control problem is to maximize the final angle. Furthermore, we observe  that at the point $\theta = 0$ the  control function is non-unique due to the symmetric nature of the problem. Hence  there would exist two alternatives for the optimal control at $\theta = 0$. We omit this analysis  for the sake of brevity.
The boundary condition for the PDE in  Eq.~\eqref{\eqnname hjbaltcostfn} is: 
\begin{align}
\phi(T, y) \aeq y, \quad \forall y \in \mathbb{R} \label{\eqnname altcontrolprobBC}.
\end{align}
As this HJB equation is degenerate parabolic we indicate the existence and uniqueness of the corresponding viscosity solution  as follows.

%in Thm. \ref{\thmname uniquenessofundiscfixedtimecostfn} (Appendix \ref{\appendname viscsolnfinitehorizonprob}).
\subsubsection{Viscosity solution for a finite time horizon problem}\label{\appendname viscsolnfinitehorizonprob}
Consider a  system evolving according to the dynamics $
dx(s) = f(s,x(s),v(s)) ds + \sigma(x(s)) dw(s), $ for $s \in [t_{0}, T]$. 
The expected trajectory hitting time problem for this system can be recast as a optimal control problem over a finite time horizon, with a cost function of the form
\begin{align}
V(t, \thn{0})  \adefeq 
%\sup_{\notncsig \in \controlSet} 
%\left \{ \mathrm{E} \big[\solnot(T; \notncsig,t,\thn{0})\big]  \right \} \label{\eqnname oaltcostfndef}\\
%\aeq  
\sup_{\notncsig \in \controlSet} {\mathrm{E} \left [ \int_{t}^{T} L(s,x,v) \,ds  + \Psi({\thn{}(T)})\right ]}.\label{\eqnname valfnFinitetimeExpectedtrajectory}
\end{align}
%Note that  the form of the running cost  (\lq$L$\rq) above, also encompasses cases where a discounting rate $\exp\{-\beta t\}$ (where  $\beta \geq 0$)  is included in this cost.

The system dynamics and the value function are defined over the cylinder $Q_{0}~:= ~[t_{0}, T) \times \mathbb{R}^{n}$.
The HJB equation associated with this optimal control problem has the form 
\begin{align}
-\spd{V}{t} &+ H(t,x,D_{x}V, {D_{x}}^{2}V) = 0, \qquad  (t,x) \in Q_{0} \\
\text{where} &\quad H(t,x,p,A):=  \nonumber\\
&\sup_{\alpha \in \controlValueSet} \Big\{-f(t,x,\alpha) p -  \half \sigma^{2}(x) A - L(t,x,\alpha)\Big\} \label{\eqnname hjbforfixedtimehorizon}
\end{align}
with the boundary condition $V(T,x) = \Psi(x), \quad \forall x \in \mathbb{R}^{n}$.

%Now, in order to demonstrate existence and uniqueness of the viscosity solution to the above PDE we  first list the assumptions required. 
%\begin{itemize}
%\item [\assumecounter] There exist  $C$ and $k$ s.t 
%\begin{align}
%\|\Psi(x)\| &\leq C (1+ \|x\|^{k}), \\
%\|f_{t}\| + \|f_{x}\| &\leq C, \quad \|\sigma_{x}\| \leq C,\\
%\|f(t,x,v)\|&\leq C(1+\|x\|+\|v\|),\\
%\|\sigma(x)\|&\leq C(1+\|x\|).
%\end{align}
%\item [\assumecounter] The control set $\controlValueSet$ is compact.
%\item [\assumecounter] The functions $f$, $L$, $L_{x}$, $L_{t}$  are bounded on $Q_{0} \times \controlValueSet$ and the function  $\sigma$ is bounded on $Q_{0}$.
%\end{itemize}
%It can be verified that these assumptions hold for the system and cost function considered in this article. 
The following result helps ensure the desired properties of the viscosity solution. 
\begin{theorem}\label{\thmname uniquenessofundiscfixedtimecostfn}
The value function Eq.~\eqref{\eqnname valfnFinitetimeExpectedtrajectory} is the unique, uniformly continuous viscosity solution to the HJB equation \eqref{\eqnname hjbforfixedtimehorizon}. 
\end{theorem}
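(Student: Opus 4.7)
The plan is to follow the classical Crandall--Ishii--Lions / Fleming--Soner viscosity-solution programme in three stages: (i) show that the value function $V$ of Eq.~\eqref{\eqnname valfnFinitetimeExpectedtrajectory} is a viscosity solution of Eq.~\eqref{\eqnname hjbforfixedtimehorizon}, (ii) establish uniform continuity of $V$, and (iii) obtain uniqueness by a comparison principle. The structural facts driving the argument are that, for the problem at hand, the drift $f(t,x,\alpha)=\alpha-2\gamma\sin(2x)$, the diffusion $\sigma(x)=2\sqrt{2\gamma}\sin(x)$ and the terminal data $\Psi(x)=\pm x$ are all globally Lipschitz in $x$ uniformly in $\alpha\in\controlValueSet$ (which is itself compact), while $\Psi$ has at most linear growth at infinity.

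Step (i). First I would verify the dynamic programming principle
\[
V(t,x)=\sup_{\notncsig\in\controlSet}\mathrm{E}\!\left[\int_{t}^{t+h}L(s,x_s,\notncsig_s)\,ds+V(t+h,x_{t+h})\right]
\]
for every $h\in[0,T-t]$; this follows from the strong Markov property of the diffusion together with a standard measurable-selection argument. Given a test function $\phi\in C^{1,2}(Q_{0})$ touching $V$ from above at $(t,x)$, I would apply It\^o's formula to $\phi(s,x_{s})$ under a constant control $\alpha\equiv v\in\controlValueSet$ on $[t,t+h]$, subtract from the DPP, divide by $h$, and send $h\downarrow 0$ to obtain the viscosity subsolution inequality; the supersolution inequality is obtained analogously using nearly optimal controls in the DPP.

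Step (ii). Uniform continuity is a consequence of standard SDE moment estimates. Because $f,\sigma$ are Lipschitz in $x$ uniformly in $v$, there is a constant $C$ with
\[
\mathrm{E}\!\left[\sup_{s\in[t\vee t',T]}|x_s^{t,x}-x_s^{t',y}|\right]\leq C\bigl(|x-y|+|t-t'|^{1/2}\bigr),
\]
which, combined with the Lipschitz continuity of $\Psi$ and boundedness of $L$ in $v$, yields the Lipschitz/Hölder estimate $|V(t,x)-V(t',y)|\leq C'(|x-y|+|t-t'|^{1/2})$ on $Q_{0}$; in particular $V$ is uniformly continuous and has at most linear growth in $x$.

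Step (iii) is the main obstacle, because $\Psi$ has linear growth and hence $V$ is unbounded, so the textbook comparison principle for bounded viscosity solutions does not apply directly. I would instead invoke the comparison principle for parabolic HJB equations in the class of functions with at most linear growth (Fleming--Soner, Theorem V.8.1, or Crandall--Ishii--Lions, Section~5). The argument doubles the variables through
\[
\Phi_{\eps,\beta}(t,x,y)=u(t,x)-v(t,y)-\tfrac{1}{2\eps}|x-y|^{2}-\beta(1+|x|^{2}+|y|^{2}),
\]
where $u$ is any uniformly continuous viscosity subsolution and $v$ any uniformly continuous viscosity supersolution sharing the terminal datum $\Psi$. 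The quadratic penalty $\beta(1+|x|^{2}+|y|^{2})$ is needed precisely to guarantee that a global maximum of $\Phi_{\eps,\beta}$ is attained on the unbounded state space $[0,T]\times\mathbb{R}^{2}$; the linear growth of $u,v$ (inherited from uniform continuity) ensures this penalty dominates. Applying the parabolic Crandall--Ishii lemma at the maximizer, using Lipschitz continuity of $f$ and $\sigma$ to absorb the cross terms produced by the Hessian comparison, and letting $\eps\downarrow 0$ followed by $\beta\downarrow 0$, one concludes $u\leq v$ on $Q_{0}$. Specialising this to two viscosity solutions $V$ and $V'$ of the same Cauchy problem gives $V\equiv V'$, yielding the stated uniqueness. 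I expect the only nonroutine ingredient to be the simultaneous handling of the unbounded domain and the unbounded terminal cost $\Psi(x)=\pm x$; the polynomial penalisation in $\Phi_{\eps,\beta}$ is the standard device that resolves it.
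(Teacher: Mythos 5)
Your proposal is correct and is essentially the same argument as the paper's: the paper's entire proof is a citation of Fleming--Soner, Ch.~V \S 9, Theorem 9.1, and what you have written out (dynamic programming principle plus It\^o's formula for the sub/supersolution properties, moment estimates for uniform continuity, and comparison via doubling of variables with a quadratic penalisation to handle the linear growth of $\Psi(x)=\pm x$ on the unbounded domain) is precisely the content of that cited theorem, applicable here because $f$, $\sigma$ and $\Psi$ are globally Lipschitz with $\controlValueSet$ compact. Your observation that uniform continuity forces at most linear growth, so that the uniqueness class in the theorem statement is consistent with the growth class needed for comparison, is a point the paper leaves implicit.
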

\begin{proof}
The result follows from \cite[Ch.\ V \S9 and Theorem 9.1]{fleming2006cmp}. 
%For brevity we omit the details of the verification of the assumptions on the system dynamics.
% due to which this result holds.
% of this verification of the assumptions  required, owing to space constraints.
\end{proof}
\section{Simulations}\label{\secname Simulations}  
{In this section we describe  numerical solutions and simulation results for the problems described in Sections \ref{\secname dismeantimetotheta}, \ref{\secname averagetheta}.  We use a numerical approximation to  value iteration approach to obtain the solution to the PDEs in Eqns.~\eqref{\eqnname hjbdef1}, \eqref{\eqnname hjbaltcostfn}. In this method, the stochastic nature of the system dynamics is captured  as the transition probabilities of an approximating Markov chain model. The transition probabilities are then used form an iterative scheme that converges to yeild the value function i.e.,  the optimal cost function. For a detailed description of this approach and applications to quantum systems we refer readers to \cite{kushner1992nms, smjpra2008}.%,sridharan2010numerical}. 

{By this approach we obtain both the optimal cost function as well as the optimal control strategy. } These are depicted  in  \frf{dischittingtimevalfn} for the case of the mean hitting time; as the corresponding plots for the expected trajectory look very similar we do not plot them. The optimal control strategy in order to rotate from $\ket{0}$ to $\ket{1}$ is to set the Hamiltonian control $v$ to $+ \controlBound$ and this is consistent with the intuitively correct,  approach, viz.,  in order to reach an angle of $\pi$ as soon as possible we must apply the maximum control in that direction.

\begin{figure}[h!]
\begin{center}
\leavevmode \includegraphics[width=1\hsize]{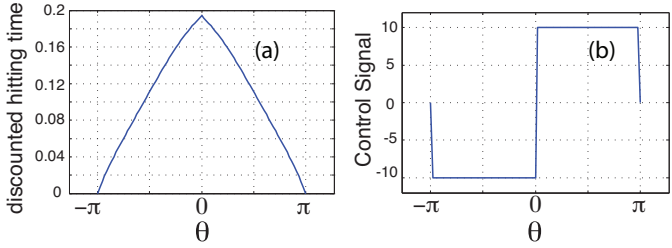}
\caption{(a) The mean hitting time to $\pm \pi$ starting from  $\theta(0) = 0$, with $\Omega=10,\gamma=1$ and $\lambda = 3$. (b) the optimal control function for the same parameter values. Note the point $\theta = 0$ at which the cost function is not differentiable in the spatial term. This reaffirms the fact that the optimal cost function  is not a classical solution of the HJB equation for our problems.
\label{dischittingtimevalfn} 
}
\end{center}
\end{figure}

 In \frf{Fig6} we depict the mean hitting time to $\pm \pi$, for the system described by \erf{\eqnname syseqn}, as a function of the control strength. Interestingly there is a {\em only} a small difference between behaviour of the mean hitting time and the expected trajectory hitting time which disappears as $\Omega \rightarrow \infty$. In this limit  the mean hitting time asymptotes to the Hamiltonian evolution hitting time as does the expected trajectory hitting time.  This is unlike the results obtained in the rapid purification literature where Wiseman and Ralph's \cite{wiseman2006reconsidering} protocol outperforms Jacobs \cite{jacobs2003project} protocol by a factor of two in the regime of strong control (the limiting case)\cite{wiseman2008optimality}.  
 The simulations in the \frf{Fig6} were obtained via standard Monte Carlo simulations.
\begin{figure}[h!]
\begin{center}
\leavevmode \includegraphics[width=0.9\hsize]{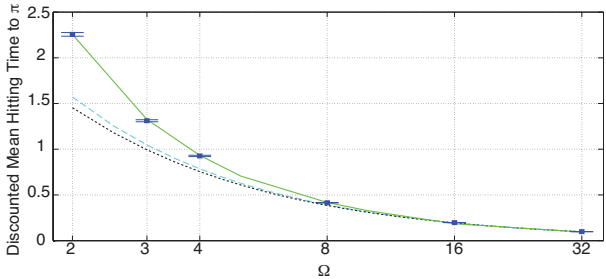}
\caption{The discounted mean hitting time to $\pi$ as a function of the control strength $\Omega$ and $\gamma=1$. In all simulations the discounting factor is $\lambda = 0.1$. The squares with error bars are the results of stochastic simulations (ensemble size is 5000). The solid line is a plot of the solution to \erf{\eqnname hjbdef1} i.e., the optimal average  hitting time. The dashed line is the time at which the discounted Hamiltonian evolution hits $\pi$. The dotted line is Hamiltonian evolution hitting time.}
\label{Fig6} 
\end{center}
\end{figure}

 We have also considered the case of optimal rotation control between different initial and final states. For example the discounted hitting time between $\pm\ket{x}$ (where $\pm\ket{x}$ are eigenstates of $\sigma_{x}$) for weak control is less than the time between $\pm\ket{z}$.  This is also true for the expected trajectory hitting time. The intuition for this effect is at $\theta=\{0,\pm \pi\}$ \erf{\eqnname syseqn} is `like' an attractor because the measurement projects the state into an eigenstate of the measured observable.
 } 

\section{Bounds on the optimal solutions for state preparation from a mixed state}\label{\secname bounds}
%So far we have solved the control problem of transforming the state between two pure states for an open quantum system via a unitary transformation. 
{In this section we obtain a bound on the minimum time taken to evolve from a maximally mixed state to  any desired pure state. We first bound the mean hitting time to prepare the target state specified by the pair ($\theta$, $P$), where $P=\tr{\rho^{2}}$ is its purity, starting from the maximally mixed state (i.e. $P=1/2$). By monitoring the system continuously a purity $P=1-\epsilon$ is achieved, on average, in time $\tau_{WR}(P)~= ~(\sqrt{2P-1}~\,\mathrm{tanh}^{-1} \sqrt{2P-1})/8\gamma,$ (using the Wiseman-Ralph protocol which is time optimal in the mean hitting time sense \cite{wiseman2008optimality,belavkin2009dynamical}). %The final state at the end of this procedure is almost an eigenstate of the observable. 
To obtain an upper bound on the time taken to rotate this eigenstate to any other state we consider the worst case: the target state is orthogonal to the initial state. %To simplify the analysis we that the control is much stronger than the measurement strength \ednote[ss]{How is this differetn from simple hamiltoian evolution??}. 
For $|\Omega|\gg |\gamma|$ we bound the rotation time by $\tau_{r}(\theta)$, i.e., the solution of {\erf{\eqnname discount}}. For $\epsilon\ll 1$ we may write the upper bound on the optimal time for state preparation as $\tau_{UB}= \tau_{WR}(P) + \tau_{r}(\theta)$, where this bound is understood in the mean hitting time sense.

In the case of the minimum time for the expected trajectory, Jacobs' purification scheme \cite{jacobs2003project} is time optimal \cite{wiseman2008optimality}. However the control rotations were instantaneous impulse control which is different from our finite strength description. Nevertheless we calculate a lower bound on the minimum time using his result $t_{\rm J}(P)= -\ln{(2P-2)}/8\gamma$. Using the worst case scenario %: rotating to the orthogonal state which of $-\ket{x}$, after preparing a state $+\ket{x}$ with a certain purity. T
the bound on the rotation time is $t_{r}(\theta)$, i.e., the solution of  Eq.~\eqref{\eqnname costfntwodef}. Consequently the lower bound on the realistic time optimal protocol is $t_{LB}= t_{\rm J}(P) +t_{r}(\theta)$, which is understood in the expected trajectory hitting time sense. We obtain an upper bound by assuming that, in the worst case, the optimal bounded strength control performs better than the Wiseman-Ralph protocol i.e., $\tau_{WR}(P)$ provides an upper bound on the true optimal time $t_{*}(P)$. In this case the upper bound  is $t_{UB}= t_{\rm WR}(P) +t_{r}(\theta)$. Thus $t_{LB}\le t_{*}(\theta, P)\le t_{UB}$.
}

\section{Conclusion and Open Problems for Future Work}\label{\secname conc}
%In this article we described two different metrics for the time optimal control of a  quantum state synthesis problem.  
In continuing with our investigation  of weak solutions there  is a need  for deeper analysis to prove the existence and uniqueness in the limit of the discount factor tending to zero. Moreover our work here suggests an exciting possibility: of being able to improve the optimal control strategy by using stages of pure Hamiltonian evolution (where we turn the measurement off) and  other periods where we use both measurement and feedback.
%These dynamic measurement and control strategies will be presented in an upcoming article.
% perform relative to the situations studied in this article.

{\em Acknowledgements} The authors would like to thank the referee of a previous version of the manuscript for insightful feedback and Matthew James for helpful comments. JC is supported by NSF Grants PHY-0903953, PHY-1212445, and
PHY-1005540 and by ONR Grant No. N00014-11-1-0082.

{

\appendices

\bibliographystyle{IEEEtran}

\end{document}